\documentclass[graybox]{svmult}

\usepackage{mathptmx}       
\usepackage{helvet}         
\usepackage{courier}        
\usepackage{type1cm}        
\usepackage{makeidx}         
\usepackage{graphicx}        
\usepackage{multicol}        
\usepackage[bottom]{footmisc}

\makeindex             

\usepackage{amsmath,amsfonts,amssymb,times}
\usepackage{latexsym}

\DeclareMathAlphabet{\curly}{U}{rsfs}{m}{n}  

\textheight=8in
\textwidth=6.5in
\oddsidemargin=0pt
\evensidemargin=0pt
\hoffset=0in

\newtheorem{lem}{Lemma}[section]
\newtheorem{thm}{Theorem}

\numberwithin{equation}{section}



\newcommand{\Li}{{\rm Li}}

\makeatletter
\renewcommand{\pmod}[1]{\allowbreak\mkern7mu({\operator@font mod}\,\,#1)}
\makeatother

\newcommand{\bal}{\[\begin{aligned}}
\newcommand{\eal}{\end{aligned}\]}

\newcommand{\be}{\begin{equation}}
\newcommand{\ee}{\end{equation}}

\newcommand{\ssum}[1]{\sum_{\substack{#1}}}  
\newcommand{\sprod}[1]{\prod_{\substack{#1}}}  


\newcommand{\eps}{\ensuremath{\varepsilon}}


\renewcommand{\le}{\leqslant}
\renewcommand{\leq}{\leqslant}
\renewcommand{\ge}{\geqslant}
\renewcommand{\geq}{\geqslant}

\renewcommand{\(}{\left(}
\renewcommand{\)}{\right)}

\newcommand{\pfrac}[2]{\left(\frac{#1}{#2}\right)}  



\newcommand{\PP}{\mathcal{P}}

\begin{document}

\author{Kevin Ford, D. R. Heath-Brown and Sergei Konyagin}

\title*{Large gaps between consecutive prime numbers containing perfect
powers}

\institute{Ford \at Department of Mathematics, 1409 West Green Street, University
of Illinois at Urbana-Champaign, Urbana, IL 61801, USA, \email{ford@math.uiuc.edu}
\and Heath-Brown \at Mathematical Institute,
Radcliffe Observatory Quarter, Woodstock Road, Oxford, OX2 6GG, UK
\email{rhb@maths.ox.ac.uk}
\and Konyagin \at Steklov Mathematical Institute,
8 Gubkin Street, Moscow, 119991, Russia \email{konyagin@mi.ras.ru}}

\maketitle

\abstract{For any positive integer $k$,
we show that infinitely often, perfect $k$-th
powers appear inside very long gaps between consecutive prime numbers,
that is, gaps of size
$$
c_k \frac{\log p \log_2 p \log_4 p}{(\log_3 p)^2},
$$
where $p$ is the smaller of the two primes.}


\section{Introduction}
\label{sec:1}
In 1938, Rankin \cite{Ra38} proved that the maximal gap, $G(x)$, between primes
$\leq x$, satisfies\footnote{As usual in the subject, $\log_2 x = \log
  \log x$, $\log_3 x = \log \log \log x$, and so on.}
\be\label{Rankin}
G(x) \geq (c+o(1)) \frac{\log x \log_2 x \log_4x}{(\log_3 x)^2},
\ee
with $c=\frac13$.  The following six decades witnessed several improvements
of the constant $c$; we highlight out only a few of these.  First,
Rankin's own improvement \cite{Ra63} $c = e^{\gamma}$ in 1963
represented the limit of what could be achieved by inserting into
Rankin's original 1938 argument best possible bounds on counts of
``smooth'' numbers.  This record stood for a long time until
 Maier and Pomerance \cite{MP} introduced new ideas to improve the
 constant to $c = 1.31256 e^{\gamma}$ in 1989; these were refined by Pintz
 \cite{Pi}, who obtained $c = 2e^{\gamma}$ in 1997.
Very recently, the first and third
authors together with B. Green and T. Tao \cite{FGKT}
have shown that $c$ can be taken arbitrarily large. Independently,
this was also proven by J. Maynard \cite{M}.


Rankin's lower bound \eqref{Rankin} is probably very far from the
truth.
Based on a probabilistic model of primes, Cram\'er \cite{Cra} conjectured
that
\[
\limsup_{X\to\infty} \frac{G(X)}{\log^2 X} = 1,
\]
and Granville \cite{Gra}, using a refinement of Cram\'er's model,
has conjectured that the $\limsup$ above is in fact at least $2e^{-\gamma}=1.229\ldots$.
Cram\'er's model also predicts that the normalized prime gaps
$\frac{p_{n+1}-p_n}{\log p_n}$ should have exponential distribution, that is,
$p_{n+1}-p_n \ge C\log p_n$ for about $e^{-C}\pi(X)$ primes $\le X$.

Our aim in this paper is to study whether or not long prime gaps, say
of the size of the right hand size of the inequality in
\eqref{Rankin}, occur when we impose that an integer of a specified type
lies inside the interval.  To be precise,
we say that a number $m$ is ``prime avoiding with constant $c$''
if $m+u$ is composite for all integers $u$ satisfying
\[
 |u| \le c \frac{\log m \log_2 m \log_4 m}{(\log_3 m)^2}.
\]
Here we will be concerned with prime avoiding perfect powers.

\begin{thm}\label{mainthm}
For any positive integer $k$,
there are a constant $c=c(k)>0$ and infinitely
many perfect $k$-th powers which are prime-avoiding with constant $c$.
\end{thm}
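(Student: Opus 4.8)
The plan is to combine the Erd\H{o}s--Rankin sieve construction of long prime gaps with the extra demand that the covering congruences be compatible with the centre of the gap being a perfect $k$-th power. Fix a large parameter $z$ (which will tend to infinity) and set $P=\prod_{p\le z}p$, so that $\log P=(1+o(1))z$ by the prime number theorem. It suffices to choose, for every prime $p\le z$, a residue class $a_p\pmod p$ with two properties: (i) every integer in $[-y,y]$ is $\equiv a_p\pmod p$ for at least one $p\le z$, where $y\asymp_k \frac{z\log z\log_3 z}{(\log_2 z)^2}$; and (ii) $-a_p\in T_p$, where $T_p:=\{t^k\bmod p:\ t\in\ZZ/p\ZZ\}$ is the set of $k$-th power residues mod $p$. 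Granting this, property (ii) lets us solve $n^k\equiv -a_p\pmod p$ for each $p$; by the Chinese remainder theorem the solutions form a nonempty residue class modulo $P$, and picking a representative $n$ with $P<n\le 2P$ we obtain $N:=n^k$ with $\log N\asymp_k z$ and, for every $u$ with $|u|\le y$, a prime $p\le z$ dividing $n^k+u$. Since $n^k+u\ge P^k-y>z\ge p$, each $n^k+u$ is composite, so $N$ is prime-avoiding with a constant $c(k)\gg_k 1$; letting $z\to\infty$ and noting $N>P^k\to\infty$ produces infinitely many such $N$.

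The substance is thus to re-run the Erd\H{o}s--Rankin covering construction subject to the side condition (ii). The crucial observation is that $0\in T_p$ for every $p$ (take $t\equiv 0$), and that $|T_p|=1+\frac{p-1}{\gcd(k,p-1)}\ge p/k$, so $-T_p$ always contains $0$ and has density $\gg_k 1$ in $\ZZ/p\ZZ$. Hence the backbone of the construction---sieving all primes $p\le w$ by the class $a_p=0$ for a suitable threshold $w$, which removes every integer in $[-y,y]$ not coprime to $\prod_{p\le w}p$ and reduces us to covering the $w$-rough integers---goes through verbatim. In the next stage, where one selects residue classes for the medium-range primes (greedily, or at random, following Rankin) so as to kill the surviving rough integers efficiently, I would restrict each choice to the admissible set $-T_p$; because the surviving integers are suitably equidistributed among the residue classes mod such $p$ while $-T_p$ occupies a fixed positive proportion of them, the best admissible class is within a bounded, $k$-dependent factor of the best class overall, so this stage survives at the cost of shrinking $y$ by a factor $\asymp_k 1$. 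The result is the same structural conclusion as in the unconstrained case, now with admissible classes.

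It remains to handle the endgame, in which one assigns to each of the few integers $m$ still uncovered its own prime from a reserved pool $\mathcal R$ of $\gg z/\log z$ large primes, setting $a_p:=m\bmod p$. For this to respect (ii) one needs, for each such $m$, that $m\bmod p\in -T_p$ for $\gg_k|\mathcal R|$ primes $p\in\mathcal R$; equivalently, that $p\mid m$ or $-m$ is a $k$-th power residue mod $p$. A greedy (or Hall-type) matching then succeeds once the number of uncovered $m$ is at most a small $k$-dependent multiple of $|\mathcal R|$, which is guaranteed by the shrinking of $y$ above. This reduces everything to a single analytic input: a lower bound, uniform over the relevant range of $m$, for the number of primes $p\le z$ modulo which $-m$ is a $k$-th power residue---obtainable from $k$-th power reciprocity together with equidistribution of primes in arithmetic progressions, after first splitting off, and killing by separate congruences, those surviving $m$ that are too large for an unconditional treatment. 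Securing that uniform bound, and checking that it meshes with the Rankin parameter choices, is the hard part; the remainder is the classical machinery plus the elementary facts about $k$-th power residues noted above.
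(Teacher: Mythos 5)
Your overall framework---run the Erd\H{o}s--Rankin covering construction with the side condition that each chosen class $a_p$ satisfy $-a_p\in T_p$---is indeed the skeleton of the paper's argument, and your elementary observations ($0\in T_p$, $|T_p|=1+(p-1)/\gcd(k,p-1)$) are correct. For \emph{odd} $k$ your plan can be completed, and in fact more easily than you suggest: one reserves primes $p\equiv 2\pmod k$, for which $\gcd(p-1,k)=1$ and hence $T_p$ is \emph{all} of $\ZZ/p\ZZ$, so the side condition (ii) is vacuous in the endgame and no reciprocity or equidistribution input is needed at all. The genuine gap is in the even case, precisely at the step you defer to at the end: the ``single analytic input'' you ask for --- a lower bound, \emph{uniform over all surviving $u$}, for the number of reserve primes $p$ with $-u$ a $k$-th power residue mod $p$ --- is not available unconditionally. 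For even $k$ the best one can do is $\gcd(p-1,k)=2$ (primes $p\equiv 3\pmod{2k}$), so the condition becomes $\pfrac{-u}{p}=1$, and ruling out a conspiracy in which $-u$ is a quadratic residue modulo almost none of the reserved primes, for every single $u$, is essentially a GRH-strength statement. Your proposed remedy (splitting off and killing by separate congruences the $m$ ``too large for an unconditional treatment'') misses the point: the exceptional $u$ are not characterized by size but by the behaviour of the character sum $\sum_{p\in\PP_3}\pfrac{-u}{p}$, and for such $u$ there are by definition almost no admissible primes left to assign.

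The paper closes this gap with two further ingredients that your proposal does not contain. First, Heath-Brown's mean value theorem for real character sums gives a second-moment bound showing that the exceptional set $U'$ has size $O_\eps(x^{1/2+2\eps})$ --- small on average, though possibly nonempty. Second, the few $u\in U'$ are handled by an entirely different mechanism: one abandons the covering strategy for them and instead varies the centre $m=Mj+A$ over $1\le j\le K$ with $K=\prod_{x/2<p\le x}p$, using an upper-bound sieve to show that for some $j$ the numbers $(Mj+A)^k+u$ are composite for all $u\in U'$ simultaneously. Making that sieve bound nontrivial requires a lower bound for $\sum_{p}\rho_u(p)/p$ where $\rho_u(p)=\#\{n\bmod p: n^k+u\equiv 0\}$, which the authors obtain from the Prime Ideal Theorem for $\QQ((-u)^{1/b})$ together with Heilbronn's theorem on real zeros of Dedekind zeta functions and Siegel's theorem (whence the ineffectivity of $c(k)$ for even $k$). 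Without some substitute for these two steps, your construction cannot be completed for even $k$.
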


%
\section{Sieve estimates}
\label{sec:2}
%
Throughout, constants implied by the Landau $O$-symbol and Vinogradov
$\ll$-symbol are absolute unless otherwise indicated, e.g. by a subscript
such as $\ll_u$.  The symbols $p$ and $q$ will always denote prime numbers.
Denote by $P^+(n)$ the largest prime factor of a positive integer
$n$, and by $P^-(n)$ the smallest  prime factor of $n$.

We need several standard lemmas from sieve theory, the distribution of
 ``smooth'' numbers, and the distribution of primes in arithmetic
progressions.

\begin{lem}\label{psi}
For large $x$ and $z \le x^{\log_3 x/(10\log_2 x)}$, we have
\[
\# \{ n\le x : P^+(n) \le z \} \ll \frac{x}{\log^5 x}.
\]
\end{lem}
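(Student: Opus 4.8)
The plan is to bound $\Psi(x,z):=\#\{n\le x:P^+(n)\le z\}$ by Rankin's trick after first reducing to the extreme case. Since $\Psi(x,z)$ is nondecreasing in $z$, it suffices to treat $z=z_0:=x^{\log_3 x/(10\log_2 x)}$; every smaller admissible $z$ then follows immediately. For this $z_0$ the relevant parameter is
\[
u:=\frac{\log x}{\log z_0}=\frac{10\log_2 x}{\log_3 x}\longrightarrow\infty ,
\]
and $\log z_0=\tfrac1{10}\log x\cdot\log_3 x/\log_2 x\to\infty$ as well, which is what makes the rest of the argument go through (for very small $z$, taken directly, the exponent $\sigma$ below would misbehave).

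Next I would apply $\Psi(x,z_0)\le x^{\sigma}\prod_{p\le z_0}(1-p^{-\sigma})^{-1}$ with the choice $\sigma=1-(\log u)/\log z_0$, so that $\sigma\to 1$ (hence $\sigma\ge\tfrac34$ for large $x$) and $z_0^{\,1-\sigma}=u$. Because $\sigma\ge\tfrac34$, the prime powers $p^{k}$ with $k\ge2$ contribute only $O(1)$ to the logarithm of the Euler product, leaving $\log\Psi(x,z_0)\le\sigma\log x+\sum_{p\le z_0}p^{-\sigma}+O(1)$. To estimate the prime sum I would write $p^{-\sigma}=p^{-1}+p^{-1}(p^{1-\sigma}-1)$, bound $p^{1-\sigma}-1\le(1-\sigma)(\log p)\,p^{1-\sigma}$, and combine Mertens' theorem $\sum_{p\le t}1/p=\log_2 t+O(1)$ with a Chebyshev bound $\sum_{p\le t}\log p\ll t$ in a partial summation; this gives $\sum_{p\le z_0}p^{-\sigma}=\log_2 z_0+O\big(z_0^{\,1-\sigma}\big)=\log_2 z_0+O(u)$.

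Putting these together, and using $(1-\sigma)\log x=u\log u$, yields
\[
\log\Psi(x,z_0)\le\log x-u\log u+\log_2 z_0+O(u).
\]
Since $\log u=\log 10+\log_3 x-\log_4 x=(1+o(1))\log_3 x$, one has $u\log u=(10+o(1))\log_2 x$; as $z_0<x$ forces $\log_2 z_0\le\log_2 x$ and $O(u)=O(\log_2 x/\log_3 x)=o(\log_2 x)$, it follows that $\log\Psi(x,z_0)\le\log x-(9+o(1))\log_2 x$, i.e.\ $\Psi(x,z_0)\le x(\log x)^{-9+o(1)}\ll x/\log^{5}x$ for large $x$. The monotonicity reduction then finishes the proof.

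The only real obstacle is keeping the estimate for $\sum_{p\le z_0}p^{-\sigma}$, and hence for the Euler product, sharp enough, since a lossy version would swamp the saving $-u\log u=-(10+o(1))\log_2 x$; happily the hypothesis permits $z$ only up to $x^{\log_3 x/(10\log_2 x)}$ whereas we merely need the exponent $5$, so there is ample slack — any constant larger than $5$ in place of $10$ would do. As an alternative, one could simply invoke the Canfield--Erd\H{o}s--Pomerance estimate $\Psi(x,x^{1/u})=x\,u^{-u(1+o(1))}$ in its standard uniform range and read off the conclusion from $u\log u=(10+o(1))\log_2 x$.
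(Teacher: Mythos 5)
Your proof is correct, and it is essentially the argument the paper has in mind: the paper simply cites ``standard counts of smooth numbers'' and Rankin's Lemma 1, which is exactly the Rankin's-trick bound $\Psi(x,z)\le x^{\sigma}\prod_{p\le z}(1-p^{-\sigma})^{-1}$ with $\sigma=1-\log u/\log z$ that you carry out in full. All the steps check out (the monotonicity reduction, $z_0^{1-\sigma}=u$, the $O(1)$ contribution of higher prime powers, and the net saving $u\log u-\log_2 z_0\ge(9+o(1))\log_2 x$), so nothing further is needed.
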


\begin{proof}
 This follows from standard counts of smooth numbers.  Lemma 1 of
Rankin \cite{Ra38} also suffices.
\qed
\end{proof}

\begin{lem}\label{sieve1modr}
Let $\mathcal{R}$ denote any set of primes and let $a\in \{-1,1\}$.  Then
\[
\# \{ p\le x : p\not \equiv a\pmod{r} \; (\forall r\in \mathcal{R}) \}
\ll \frac{x}{\log x} \sprod{p\in \mathcal{R}\\ p\le x} \(1-\frac{1}{p}\).
\]
\end{lem}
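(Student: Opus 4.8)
The plan is to view the count as a sifted set, bound it by the large sieve inequality, and reduce matters to a lower bound for a sum of a multiplicative function. First, some preliminaries: if $2 \in \mathcal{R}$ then, since $a$ is odd, the only integer satisfying all the congruence conditions is $2$ itself, so the left side is $O(1)$ and the claim is trivial; assume from now on that $2 \notin \mathcal{R}$. Also, deleting the primes $p \le \sqrt x$ alters the count by at most $\pi(\sqrt x) \ll \sqrt x \ll x/(\log x)^2$, which is harmless, so it suffices to bound $|\mathcal{A}|$, where
\[
\mathcal{A} := \{\, p : \sqrt x < p \le x,\ p \not\equiv a \pmod{r}\ \text{for all } r \in \mathcal{R} \,\}.
\]

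Now apply Montgomery's large sieve to $\mathcal{A}$, regarded as a subset of the interval $(\sqrt x,x]$, with level $Q = \sqrt x$. Every element of $\mathcal{A}$ is a prime larger than $Q$, so for each prime $r \le Q$ the set $\mathcal{A}$ omits the residue class $0 \bmod r$; and if moreover $r \in \mathcal{R}$ it omits $a \bmod r$ as well, a class distinct from $0 \bmod r$ since $r \ge 3$. Letting $\omega(r)$ denote the number of residue classes modulo $r$ missed by $\mathcal{A}$, we thus have $\omega(r) \ge 1$ for every prime $r \le Q$, and $\omega(r) \ge 2$ when in addition $r \in \mathcal{R}$. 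As $\omega \mapsto \omega/(r-\omega)$ is increasing, the large sieve gives $|\mathcal{A}| \le (x+Q^2)\big/\sum_{q\le Q}\mu^2(q)\prod_{p\mid q}\frac{\omega(p)}{p-\omega(p)}$, and since $x+Q^2\ll x$ and $\frac{\omega(p)}{p-\omega(p)}\ge h(p)$ this yields
\[
|\mathcal{A}| \ll \frac{x}{\sum_{q\le Q}\mu^2(q)\,h(q)}, \qquad h(q) := \prod_{\substack{p\mid q\\ p\in\mathcal{R}}}\frac{2}{p-2}\prod_{\substack{p\mid q\\ p\notin\mathcal{R}}}\frac{1}{p-1}.
\]

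It remains to prove that the denominator is $\gg \log x \cdot \prod_{r\in\mathcal{R},\,r\le x}(1-1/r)^{-1}$. Here $h$ is multiplicative, supported on squarefrees, with $h(p) \ge \frac1{p-1}$ for every prime $p$, with $h(p) \ge \frac2{p-1}$ for $p\in\mathcal{R}$, and with $h(p) = O(1/p)$. By the standard lower bound for truncated sums of such multiplicative functions, namely $\sum_{q\le Q}\mu^2(q)h(q) \gg \prod_{p\le Q}(1+h(p))$ (as in Halberstam--Richert), we get
\[
\sum_{q\le Q}\mu^2(q)h(q) \gg \Bigl(\prod_{p\le Q}\frac{p}{p-1}\Bigr)\prod_{\substack{r\in\mathcal{R}\\ r\le Q}}\frac{1+2/(r-1)}{1+1/(r-1)}.
\]
By Mertens the first factor is $\asymp\log Q\asymp\log x$; the second equals $\prod_{r\in\mathcal{R},\,r\le Q}(1+1/r)$, which is $\asymp\prod_{r\in\mathcal{R},\,r\le Q}(1-1/r)^{-1}$ because the ratio of the two products is $\prod_{r\in\mathcal{R},\,r\le Q}(1-r^{-2}) \in [1/\zeta(2),1]$. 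Finally $\prod_{\sqrt x<r\le x}(1-1/r)\asymp 1$ by Mertens, so $\prod_{r\in\mathcal{R},\,r\le Q}(1-1/r)^{-1}\asymp\prod_{r\in\mathcal{R},\,r\le x}(1-1/r)^{-1}$; assembling the bounds gives the lemma.

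The main difficulty is exactly the inequality $\sum_{q\le Q}\mu^2(q)h(q)\gg\prod_{p\le Q}(1+h(p))$, needed uniformly over the arbitrary set $\mathcal{R}$: since the Euler product on the right can be as large as $(\log Q)^2$, no lossy estimate will do, and in particular bounding the tail $\sum_{q>Q}\mu^2(q)h(q)$ by Rankin's trick is too weak, so one falls back on the standard machinery for sums of multiplicative functions of bounded sieve dimension. (Alternatively one may sieve the primes $p\le x$ directly via Selberg's upper bound sieve, controlling the remainder terms over moduli up to $x^{1/2-\varepsilon}$ by the Bombieri--Vinogradov theorem; this replaces the estimate above by the companion bound $\sum_{d\le z,\ p\mid d\Rightarrow p\in\mathcal{R}}\mu^2(d)/\phi(d)\gg\prod_{r\in\mathcal{R},\,r\le z}(1-1/r)^{-1}$, which rests on the same circle of ideas.)
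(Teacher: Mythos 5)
Your argument is correct and is a legitimate instantiation of exactly what the paper intends: the paper's proof is simply the citation ``Standard sieve methods [HR]'', and your large-sieve computation (omitting $0 \bmod r$ for all $r\le \sqrt{x}$ and additionally $a \bmod r$ for $r\in\mathcal{R}$, then lower-bounding the singular sum via the standard Halberstam--Richert estimate $\sum_{q\le Q}\mu^2(q)h(q)\gg\prod_{p\le Q}(1+h(p))$ for multiplicative functions of bounded sieve dimension) is a standard and valid way to carry that out. The edge cases ($2\in\mathcal{R}$, primes $\le\sqrt{x}$, extending the product from $\sqrt{x}$ to $x$) are all handled correctly, so nothing further is needed.
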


\begin{proof}
Standard sieve methods \cite{HR}.
\qed
\end{proof}

Finally, we require a bound of ``large sieve''
type for averages of quadratic character sums.

\begin{lem}\label{HBlemma}
For any set $\mathcal{P}$ of primes in $[2,x]$, and for any $\eps>0$,
\[
\ssum{m\le x \\ m\text{ odd}} \mu^2(m) \Big| \sum_{p\in \mathcal{P}} \pfrac{p}{m}
\Big|^2 \ll_\eps x^{2+\eps}.
\]
\end{lem}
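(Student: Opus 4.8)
The plan is to recognise the left-hand side as an instance of the large-sieve inequality for quadratic characters, and to obtain Lemma~\ref{HBlemma} from Heath-Brown's mean value estimate for real character sums (\emph{Acta Arith.} \textbf{72} (1995)). In one standard form, that result states that for every $\eps>0$, all $M,N\ge1$, and all complex numbers $(a_n)$,
\[
\ssum{m\le M\\ m\text{ odd}}\mu^2(m)\,\Big|\ssum{n\le N\\ n\text{ odd}}\mu^2(n)\,a_n\pfrac{n}{m}\Big|^2
\ll_\eps (MN)^\eps\,(M+N)\ssum{n\le N\\ n\text{ odd}}\mu^2(n)\,|a_n|^2 .
\]
To deduce the lemma, first observe that the term $p=2$ (if $2\in\mathcal P$) contributes at most $2\sum_{m\le x}\big|\pfrac{2}{m}\big|^2\le 2x$ to the left-hand side of the lemma, after an application of $|A+B|^2\le2|A|^2+2|B|^2$; so we may assume every $p\in\mathcal P$ is odd. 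Now apply the displayed inequality with $M=N=x$ and $a_n=\mathbf 1_{n\in\mathcal P}$. Primes are squarefree, so $\sum_n\mu^2(n)|a_n|^2=|\mathcal P|\le\pi(x)\le x$, and the bound becomes $\ll_\eps x^{2\eps}\cdot 2x\cdot x\ll_\eps x^{2+\eps}$ after renaming $\eps$.

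If instead one wants a self-contained argument, the skeleton is: expand the square to reach $\sum_{m\le x}^{\flat}\sum_{p_1,p_2\in\mathcal P}\pfrac{p_1p_2}{m}$ (the Jacobi symbols are real and multiplicative in the top, so no conjugation survives). The diagonal $p_1=p_2$ gives $\pfrac{p_1^2}{m}=\mathbf 1_{(m,p_1)=1}$ and hence contributes at most $|\mathcal P|\cdot x\ll x^2$. For the off-diagonal, $n:=p_1p_2$ is a squarefree non-square with $n\le x^2$; stripping the conditions ``$m$ odd'' and $\mu^2(m)$ off via $\mathbf 1_{m\text{ odd}}=\sum_{e\mid(m,2)}\mu(e)$ and $\mu^2(m)=\sum_{d^2\mid m}\mu(d)$ reduces matters to incomplete sums of the real, non-principal Dirichlet character $m\mapsto\pfrac{n}{m}$, whose conductor divides $4n$, to which the P\'olya--Vinogradov inequality applies.

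The obstacle is that this elementary route falls short by a full power of $x$: even invoking the classical large sieve over \emph{all} characters to each modulus $\ll x$ produces only the bound $\ll (x^2+x)\cdot|\mathcal P|\ll x^3$, whereas we need $x^{2+\eps}$. Recovering the missing power of $x$ is exactly the content of Heath-Brown's theorem, and rests on the fact that for each modulus only the single \emph{quadratic} character occurs; he exploits this through an induction on the two length parameters together with Poisson summation for $\pfrac{n}{m}$ along residue classes. Reproducing that induction is the real work, so in practice I would simply cite it.
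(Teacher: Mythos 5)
Your proposal is correct and follows exactly the paper's route: the authors prove this lemma by citing Theorem 1 of Heath-Brown's \emph{A mean value estimate for real character sums} (Acta Arith.\ \textbf{72} (1995)), which is precisely the quadratic large sieve inequality you quote, applied with $a_n=\mathbf{1}_{n\in\mathcal{P}}$ and $M=N=x$. Your extra care with the prime $p=2$ and your discussion of why elementary methods lose a power of $x$ are sensible additions but not part of the paper's (one-line) argument.
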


\begin{proof}
This follows immediately from Theorem 1 of \cite{H-B}.
\qed
\end{proof}

%
\section{$k$-th power residues and prime ideals}
\label{sec:PIT}
%

One of our principle tools is the following estimate for an average of
counts of solutions of a certain $k$-th power congruence.

\begin{lem}\label{rhop}
 Let $k$ be a positive integer.  For any non-zero integer $u$ and any
 prime $p$ write
\[\rho_{u,k}(p)=\rho_u(p)=\#\{n\pmod{p}:\,n^k+u\equiv 0\pmod{p}\}.\]
Then for any fixed $\eps>0$ and $x\ge2$ we have
\[\prod_{x<p\le y}\left(1-\frac{\rho_u(p)}{p}\right)\ll_{k,\eps}
|u|^{\eps}\frac{\log x}{\log y}.\]
\end{lem}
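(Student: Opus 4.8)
The strategy is to relate the product to an $L$-function attached to the splitting of primes in the number field $K = \mathbb{Q}(\zeta_k, (-u)^{1/k})$ or, more precisely, to the Dedekind zeta function of $\mathbb{Q}(\theta)$ where $\theta^k = -u$. The key observation is that $\rho_u(p)$ counts roots of $X^k + u$ mod $p$, and for all but finitely many $p$ (those dividing $u k$ and the discriminant), $\rho_u(p)$ equals the number of degree-one prime ideals of $\mathbb{Z}[\theta]$ above $p$ — equivalently, $\rho_u(p) = \sum_{\chi} \chi(p)$ summed over a suitable family of Dirichlet-like characters, after accounting for the field structure. So $\prod_{x<p\le y}(1 - \rho_u(p)/p)^{-1}$ behaves like $\prod_{x < p \le y}(1 - p^{-1})^{-r}$ weighted by an Euler product that converges, where on average $\rho_u(p)$ is $1$ by Chebotarev. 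That would give the product $\asymp (\log x/\log y)$, but the dependence on $u$ must be tracked explicitly, which is the whole point of the $|u|^\eps$ factor.

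I would proceed as follows. First, take logarithms: $-\log \prod_{x<p\le y}(1 - \rho_u(p)/p) = \sum_{x<p\le y} \frac{\rho_u(p)}{p} + O\!\left(\sum_{x<p\le y}\frac{\rho_u(p)^2}{p^2}\right)$, and since $\rho_u(p) \le k$ always, the error term is $O_k(1/x)$, hence harmless. So it suffices to show $\sum_{x<p\le y} \frac{\rho_u(p)}{p} = \log\log y - \log\log x + O_{k,\eps}(\eps' \log\log|u| + 1)$ with an appropriate constant — actually we only need an upper bound of the right shape, so we need $\sum_{x<p\le y}\rho_u(p)/p \ge \log(\log y/\log x) - O_{k,\eps}(\log|u|^\eps)$, i.e. a \emph{lower} bound on the character sum, which is the harder direction. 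Write $\rho_u(p) = \sum_{d \mid k} a_d \cdot (\text{number of primes of } \mathbb{Q}(\zeta_d) \text{ above } p \text{ in a certain set})$ via inclusion–exclusion on which roots of unity appear; concretely, for $p \nmid uk$, $\rho_u(p) = \#\{j \bmod k : p \text{ splits compatibly}\}$, and one can express $\sum_{p \le t} \rho_u(p)\log p = \mathrm{li}(t) + (\text{error})$ by the prime ideal theorem applied to $K$, with the error term controlled by a zero-free region for $\zeta_K$. The dependence on $u$ enters through $\mathrm{disc}(K)$, which is bounded by a power of $|u|$ times a constant depending on $k$; a standard effective prime ideal theorem (Lagarias–Odlyzko style, or even just a Mertens-type estimate for prime ideals with a crude error) then yields $\sum_{x < p \le y}\rho_u(p)/p = \log(\log y/\log x) + O_k(\text{something involving }\log|u|)$ once $x$ exceeds a small power of $|u|$; for the range where $x$ is comparable to or smaller than a power of $|u|$, the trivial bound $\rho_u(p) \le k$ gives $\sum \rho_u(p)/p \ll_k \log\log|u|$, and one checks this is absorbed into $|u|^\eps$.

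The main obstacle is the dependence on $u$ and, relatedly, the possible Siegel zero of $\zeta_K$ (or of a quadratic subfield). A Siegel zero would wreck the effective prime ideal theorem and could in principle make $\rho_u(p)$ systematically small on a long initial range of $p$, making the product large. The way around this is precisely the factor $|u|^\eps$ and the fact that we only need this for $x \ge 2$ with $|u|$ potentially much larger than $x$: when $x$ is small relative to $|u|$ we use the trivial bound, and when $x$ is at least a fixed power of $|u|$, the exceptional zero (if it exists) is at distance $\gg_\eps |u|^{-\eps}$ from $1$ by Siegel's theorem, so its contribution to $\sum_{x < p \le y}\rho_u(p)/p$ is $O_\eps(x^{-c\eps} \cdot \text{something}) = O_{k,\eps}(1)$. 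Thus Siegel's (ineffective) lower bound for $L(1,\chi)$ is exactly what forces the constant in $\ll_{k,\eps}$ to be ineffective, and that is acceptable here. Everything else — partial summation to pass from $\sum \rho_u(p)\log p$ to $\sum \rho_u(p)/p$, bounding $\mathrm{disc}(K)$ in terms of $|u|$ and $k$, and the inclusion–exclusion expressing $\rho_u(p)$ through ideal-counting functions — is routine algebraic number theory.
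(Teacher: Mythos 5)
Your overall strategy coincides with the paper's: apply an effective Prime Ideal Theorem to the field generated by a root of $X^k+u$, bound its discriminant by a power of $|u|$, handle the possible real zero of $\zeta_K$ via Siegel's theorem (after noting, as Heilbronn's theorem guarantees, that such a zero must come from a quadratic subfield), use the trivial bound $\rho_u(p)\le k$ below a threshold depending on $u$, and finish by partial summation, exploiting the fact that only a lower bound on $\sum_{x<p\le y}\rho_u(p)/p$ is required. One minor detail: $X^k+u$ need not be irreducible (e.g.\ when $-u$ is a perfect power), so ``the field $\mathbb{Q}(\theta)$ with $\theta^k=-u$'' is not well defined and Dedekind's theorem does not directly identify $\rho_u(p)$ with a count of degree-one primes; since only a lower bound is needed, the clean fix (and the paper's) is to write $-u=v^a$ with $a\mid k$ maximal, $k=ab$, and work with the irreducible polynomial $X^b-v$, using $\rho_{u,k}(p)\ge\rho_{-v,b}(p)$.

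There is, however, a genuine quantitative error in your sketch: the claim that the exceptional-zero contribution is $O_{k,\eps}(1)$ once $x$ exceeds a fixed power of $|u|$. With $1-\beta_0\gg_\eps d_K^{-\eps}\gg_{k,\eps}|u|^{-\eps}$, the zero contributes about $\int_x^y t^{\beta_0-2}\,dt/\log t\ll x^{-(1-\beta_0)}/\bigl((1-\beta_0)\log x\bigr)$ to $\sum_{x<p\le y}\rho_u(p)/p$; for $x=|u|^C$ this has size roughly $|u|^{\eps}/\log|u|$, which upon exponentiation contributes a factor $\exp(|u|^{\eps}/\log|u|)$ and destroys the bound. (The Lagarias--Odlyzko theorem itself also only applies for $x\ge\exp\{10n_K(\log d_K)^2\}$, already super-polynomial in $|u|$.) The correct threshold is $x_0\approx\exp(|D|^{\eta})$ with $|D|\ll_k|u|^{k}$, as in the paper, and it is precisely the trivial treatment of the range $p\le x_0$ --- costing $\log\log x_0\approx\eta k\log|u|$ in the exponent, i.e.\ a factor $|u|^{k\eta}$ --- that is the true source of the $|u|^{\eps}$ loss in the lemma. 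Your accounting, which attributes only a $\log\log|u|$ loss to the trivial range and hence would yield a bound of quality $\log|u|$, misidentifies where $|u|^{\eps}$ comes from; with the threshold corrected, the argument closes exactly as in the paper.
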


The proof of this is based on the Prime Ideal Theorem, and we begin by
giving a formal statement of an appropriate form of the latter.

\begin{lem}
There is an effectively computable absolute constant $c>0$ with the
following property.
Let $K$ be an algebraic number field of degree $n_K$, and write $d_K$
for the absolute value of the discriminant of $K$. Let $\beta_0$ be
the largest simple real zero of $\zeta_K(s)$ in the interval $[\tfrac12,1]$
if any such exists. Then
\[|\pi_K(x)-\Li(x)|\le\Li(x^{\beta_0})+
cx\exp\{-cn_K^{-1/2}(\log x)^{1/2}\}\]
for $x\ge\exp\{10n_K(\log d_K)^2\}$,
where as usual, $\pi_K(x)$ denotes the number of prime ideals of $K$
with norm at most $x$, and $\Li(x)=\int_2^x dt/\log t$.
We omit the first summand on the right hand side if $\beta_0$ does not exist.
\end{lem}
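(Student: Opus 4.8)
The statement is the effective Prime Ideal Theorem in essentially the form established by Lagarias and Odlyzko, and the plan is to derive it from the analytic behaviour of the Dedekind zeta function $\zeta_K(s)$ in the classical way. The first ingredient is a zero-free region: by the work of Stark (refining Landau) there is an absolute $c_1>0$ such that $\zeta_K(\sigma+it)\neq0$ whenever
\[
\sigma\ge 1-\frac{c_1}{n_K\log\bigl(d_K(|t|+2)^{n_K}\bigr)},
\]
with at most one exception, a simple real zero $\beta_0\in[\tfrac12,1)$. Alongside this one needs the standard counting bound for the nontrivial zeros $\rho=\beta+i\gamma$ of $\zeta_K$, in the local form $\#\{\rho:\,|\gamma-t|\le1\}\ll\log d_K+n_K\log(|t|+2)$, together with its consequence for $\sum_{|\gamma|\le T}1$.

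Second, I would feed these into the explicit formula. Truncated Perron applied to $-\zeta_K'/\zeta_K$, whose Dirichlet series has coefficients $\Lambda_K(\mathfrak{a})$ (the von Mangoldt function of $K$), writes the Chebyshev function $\psi_K(x)=\sum_{\mathfrak{N}\mathfrak{a}\le x}\Lambda_K(\mathfrak{a})$ as the main term $x$, minus the exceptional contribution $x^{\beta_0}/\beta_0$ when $\beta_0$ exists, minus $\sum_{|\gamma|\le T}x^\rho/\rho$ over the remaining zeros, plus a Perron truncation error of order $O(x(n_K+\log d_K)\log^2 x/T)$ and the like. Bounding the zero sum by pushing the contour to $\Re s=1-c_1/(n_K\log(d_K T^{n_K}))$ and invoking the zero count, then optimising $T$, gives
\[
\psi_K(x)=x-\frac{x^{\beta_0}}{\beta_0}+O\bigl(x\exp\{-c_2 n_K^{-1/2}(\log x)^{1/2}\}\bigr).
\]
Here the hypothesis $x\ge\exp\{10n_K(\log d_K)^2\}$ is exactly what is needed: it forces $\log d_K\ll n_K^{-1/2}(\log x)^{1/2}$, which is precisely what makes the $d_K$-dependent part of the zero-free region small enough for the displayed exponential error to dominate all the secondary terms.

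Third, pass from $\psi_K$ to $\pi_K$. Higher prime-ideal powers $\mathfrak{p}^j$, $j\ge2$, contribute only $O(n_K x^{1/2}\log x)$ to $\psi_K(x)$, which is negligible in the stated range of $x$; partial summation against $1/\log t$ then converts the $\psi_K$ asymptotic into
\[
\pi_K(x)=\Li(x)-\Li(x^{\beta_0})+O\bigl(x\exp\{-c\,n_K^{-1/2}(\log x)^{1/2}\}\bigr),
\]
the term $\Li(x^{\beta_0})$ being dropped if $\beta_0$ does not exist. Since $\Li(x^{\beta_0})\ge0$ in this range, this immediately yields the one-sided estimate $|\pi_K(x)-\Li(x)|\le\Li(x^{\beta_0})+cx\exp\{-c\,n_K^{-1/2}(\log x)^{1/2}\}$ claimed, possibly after shrinking the absolute constant $c$.

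The real content — and the reason this is best quoted rather than reproved — is the \emph{uniformity} in $n_K$ and $d_K$: every implied constant in the zero-free region, in the zero-counting estimate, and in the truncated Perron formula has to be tracked explicitly, and Stark's zero-free region (with its careful handling of the single possible exceptional zero $\beta_0$) is the delicate step. Once those effective inputs are granted, the argument above is bookkeeping, and the threshold $x\ge\exp\{10n_K(\log d_K)^2\}$ is taken with enough slack that all lower-order contributions are swallowed by the stated error term.
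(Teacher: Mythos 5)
Your outline is mathematically sound, but it takes a different (and much longer) route than the paper: the paper does not reprove anything, it simply quotes Theorem 1.3 of Lagarias and Odlyzko with $L=K$, so that the conjugacy class is trivial and $\pi_C(x,L/K)$ becomes $\pi_K(x)$; the only substantive observation in the paper's proof is that the Lagarias--Odlyzko counting function omits ramified primes, of which there are only $O(n_K\log d_K)$, an amount absorbed into the error term $x\exp\{-cn_K^{-1/2}(\log x)^{1/2}\}$ in the stated range of $x$. What you have written is, in effect, a sketch of how Lagarias and Odlyzko prove that theorem (Stark's zero-free region with at most one exceptional simple real zero, local zero-density bounds, truncated Perron for $-\zeta_K'/\zeta_K$, passage from $\psi_K$ to $\pi_K$ by partial summation), and you correctly identify that the entire difficulty lies in the uniformity of the constants in $n_K$ and $d_K$ --- which is exactly the part your sketch defers rather than carries out, and which is why the paper cites rather than reproves. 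Your direct approach does have one small advantage: working with $\psi_K(x)=\sum_{\mathfrak{N}\mathfrak{a}\le x}\Lambda_K(\mathfrak{a})$ counts ramified prime ideals from the start, so the ramified-prime adjustment that the paper must make when translating from $\pi_C(x,L/K)$ never arises. As a referee's check on the citation your write-up is more than adequate; as a self-contained proof it is not, since every implied constant in the zero-free region, the zero count, and the Perron truncation would need to be made explicit and uniform before the threshold $x\ge\exp\{10n_K(\log d_K)^2\}$ and the exponent $n_K^{-1/2}(\log x)^{1/2}$ can be justified.
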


This follows from Theorem 1.3 of Lagarias and Odlyzko \cite{LO}, on
choosing $L=K$ in their notation.  The reader should note that the
counting function $\pi_C(x,L/K)$ of \cite{LO} excludes ramified
primes, but the number of these is $O(n_K\log d_K)$, which is
majorized by $x\exp\{-cn_K^{-1/2}(\log x)^{1/2}\}$.

In order to handle the term involving the possible simple real zero $\beta_0$
we use the following result of Heilbronn \cite[Theorem 1]{Heil}.
\begin{lem}
A simple real zero of $\zeta_K(s)$ must be a zero of $\zeta_k(s)$ for
some quadratic subfield $k$ of $K$.
\end{lem}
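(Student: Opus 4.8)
The plan is to pass to the Galois closure and convert the statement into a question about orders of vanishing of Artin $L$-functions, then isolate the ``odd part'' of the simple zero.

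Let $M$ be the Galois closure of $K/\QQ$, put $G=\mathrm{Gal}(M/\QQ)$ and $H=\mathrm{Gal}(M/K)$, and for a subgroup $J\le G$ write $\theta_J$ for the permutation character of $G$ on $G/J$, so that $M^J$ is the fixed field and $\zeta_{M^J}(s)=\prod_\rho L(s,\rho)^{\langle\theta_J,\rho\rangle}$, the product being over the irreducible characters $\rho$ of $G$, by the Artin--Brauer factorization (Brauer's theorem guarantees each $L(s,\rho)$ is meromorphic). I set $e_\rho=\ord_{s=\beta}L(s,\rho)\in\ZZ$ and $E=\sum_\rho e_\rho\rho$, a virtual character. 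Then $\ord_{s=\beta}\zeta_{M^J}=\langle\theta_J,E\rangle$ for every $J$; the hypothesis is $\langle\theta_H,E\rangle=1$, and since a quadratic subfield $k\subseteq K$ is exactly $M^J$ for an index-$2$ subgroup $J\supseteq H$, it suffices to produce such a $J$ with $\langle\theta_J,E\rangle\ge1$, equivalently a nontrivial order-$2$ character $\chi$ of $G$, trivial on $H$, with $e_\chi\ge1$.

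Two soft inputs cut the problem down. First, positivity: each $\zeta_{M^J}$ is holomorphic at $\beta\neq1$, so $\langle\theta_J,E\rangle\ge0$ for all $J$, and $e_{\mathbf 1}=\ord_{s=\beta}\zeta=0$. Second, reality: since $L(s,\overline\rho)=\overline{L(\overline s,\rho)}$ we have $e_\rho=e_{\overline\rho}$, so the non-self-dual $\rho$ occur in conjugate pairs contributing the even amount $2\langle\theta_H,\rho\rangle e_\rho$ to $\langle\theta_H,E\rangle$. As $e_{\mathbf 1}=0$, reducing $\langle\theta_H,E\rangle=1$ modulo $2$ shows that some self-dual $\rho$ has $\langle\theta_H,\rho\rangle e_\rho$ odd; in particular $e_\rho$ is odd and $L(\beta,\rho)=0$. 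If this $\rho$ is one-dimensional it is a quadratic character, necessarily trivial on $H$ because $\langle\theta_H,\rho\rangle\neq0$, and we are done.

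The main obstacle is the case where the odd contribution is carried only by self-dual $\rho$ of dimension $\ge2$; positivity alone does not exclude this (for a cubic field $K$ with group $S_3$ and no quadratic subfield, the standard two-dimensional $\rho$ passes every positivity test with $e_\rho=1$), so genuine information beyond holomorphy is needed. To handle it I would apply Brauer's induction theorem to write $\rho$, in a conjugation-symmetric form, as an integral combination of characters induced from one-dimensional $\lambda$ of subgroups $U$, whence $e_\rho$ becomes an integral combination of orders $\ord_{s=\beta}L(s,\lambda)$ of abelian (Hecke) $L$-functions over the fields $M^U$. The conjugate pairs $\{\lambda,\overline\lambda\}$ contribute evenly, so the oddness of $e_\rho$ can only be due to a real $\lambda$, that is, a trivial or quadratic ray-class character vanishing at $\beta$; this mirrors the classical principle that real zeros are produced by real characters. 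The remaining and hardest point is to descend this quadratic vanishing through the tower back to a quadratic character of $G$ that is trivial on $H$, using the fixed-space inequalities $\dim V_\rho^{H}\ge\dim V_\rho^{J}$ for $H\le J$ together with the positivity constraints; this descent, rather than the zero-counting, is where the real work of the argument lies.
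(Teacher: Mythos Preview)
The paper does not prove this lemma; it simply quotes Heilbronn. So the question is only whether your argument stands on its own, and it does not: the ``descent'' you flag as the real work is never carried out, and in fact cannot be carried out along the lines you sketch. Your own $S_3$ example already shows why. With $G=S_3$ and $H$ generated by a transposition, the only nontrivial order-two character of $G$ is the sign, which is \emph{not} trivial on $H$; there is no quadratic subfield of $K$ to descend to. On the Brauer-induction side, writing the two-dimensional $\rho$ as $\mathrm{Ind}_{A_3}^{S_3}\psi$ with $\psi$ cubic expresses $e_\rho$ as a single order of vanishing attached to a \emph{non-real} one-dimensional $\lambda$, so your parity claim (``the oddness of $e_\rho$ can only be due to a real $\lambda$'') is false; the alternative decomposition $\rho=\mathrm{Ind}_H^{S_3}\mathbf{1}-\mathbf{1}$ uses only trivial $\lambda$'s but returns $e_\rho=\ord_{s=\beta}\zeta_K$, which is circular. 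Brauer induction is not canonical enough to support the pairing-off argument you propose.

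Heilbronn's published theorem in fact assumes $K/\QQ$ \emph{normal}, so $H=\{1\}$ and the triviality-on-$H$ obstruction disappears. His argument does not try to dissect a single bad $\rho$; instead it uses holomorphy of \emph{all} abelian (Hecke) $L$-functions over every intermediate field to get $\langle E|_J,\psi\rangle\ge 0$ for every cyclic $J\le G$ and every $\psi\in\widehat{J}$. Thus $E|_J$ is a genuine character of each cyclic $J$, and since $E(1)=1$ it must be one-dimensional there, so $|E(g)|=1$ for all $g$; then $\langle E,E\rangle=1$ forces $E$ itself to be a single one-dimensional real (hence quadratic) character of $G$. Your framework (the virtual character $E$, positivity, reality) is the right one, but the missing idea is to test $E$ against characters induced from \emph{all} cyclic subgroups rather than to Brauer-decompose an individual $\rho$.
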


It follows that $\beta_0$ is a zero for some quadratic Dirichlet
L-function $L(s,\chi)$, with a character $\chi$ of conductor dividing
$d_K$. Thus Siegel's Theorem shows that $1-\beta_0\ge c(\eps)
d_K^{-\eps}$, for any fixed $\eps>0$, with an ineffective constant
$c(\eps)>0$.  We then deduce that
\[\Li(x^{\beta_0})\ll x^{\beta_0}\le x\exp\{-c(\eps)d_K^{-\eps}\log x\}
\le x\exp\{-c(\eps)n_K^{-1/2}(\log x)^{1/2}\}\]
if $n_K\log x\ge d_K^{2\eps}$.  We therefore obtain the following
version of the Prime Ideal Theorem.

\begin{lem}\label{lem_n1}
For any $\eta>0$ there is an ineffective constant $C(\eta)>0$ with the
following property.
Let $K$ be an algebraic number field of degree $n_K$, and write $d_K$
for the absolute value of the discriminant of $K$. Then
\[\pi_K(x)=\Li(x)+O(x\exp\{-C(\eta)n_K^{-1/2}(\log x)^{1/2}\})\]
for
\[x\ge\exp\left\{\max\big(10n_K(\log d_K)^2\,,\,n_K^{-1}d_K^\eta\big)\right\}.\]
\end{lem}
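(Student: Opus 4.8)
The plan is to start from the Lagarias--Odlyzko form of the Prime Ideal Theorem recorded just above, namely
\[
|\pi_K(x)-\Li(x)|\le\Li(x^{\beta_0})+cx\exp\{-cn_K^{-1/2}(\log x)^{1/2}\}
\qquad(x\ge\exp\{10n_K(\log d_K)^2\}),
\]
whose only defect, relative to the clean asymptotic we seek, is the term $\Li(x^{\beta_0})$ arising from a possible simple real zero $\beta_0\in[\tfrac12,1]$ of $\zeta_K(s)$. Thus the entire task reduces to showing that, in the stated range of $x$, this exceptional term is itself $O(x\exp\{-C(\eta)n_K^{-1/2}(\log x)^{1/2}\})$ and so may be absorbed into the error.

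First I would locate the zero $\beta_0$. By Heilbronn's lemma a simple real zero of $\zeta_K$ is a zero of $\zeta_k$ for some quadratic subfield $k\subseteq K$; since $\zeta_k(s)=\zeta(s)L(s,\chi)$ for the quadratic character $\chi$ attached to $k$, the real zero $\beta_0$ must be a zero of $L(s,\chi)$. Because $k\subseteq K$ forces the conductor of $\chi$ to divide $d_K$, I may invoke Siegel's Theorem: for every fixed $\eps>0$ there is an ineffective $c(\eps)>0$ with $1-\beta_0\ge c(\eps)d_K^{-\eps}$. This is the one genuinely ineffective ingredient, and the source of the ineffective constant $C(\eta)$.

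It then remains to convert this zero-free spacing into the desired decay. Using $\Li(x^{\beta_0})\ll x^{\beta_0}=x\,x^{-(1-\beta_0)}\le x\exp\{-c(\eps)d_K^{-\eps}\log x\}$, I would choose $\eps=\eta/2$ and check the elementary inequality $d_K^{-\eps}\log x\ge n_K^{-1/2}(\log x)^{1/2}$, which is equivalent to $\log x\ge n_K^{-1}d_K^{2\eps}=n_K^{-1}d_K^{\eta}$. This is exactly one of the two conditions imposed on $x$, so in the stated range the exceptional term is $\ll x\exp\{-c(\eps)n_K^{-1/2}(\log x)^{1/2}\}$. Taking the lower admissible value of $x$ to be $\exp\{\max(10n_K(\log d_K)^2,\,n_K^{-1}d_K^{\eta})\}$ guarantees simultaneously the validity of the Lagarias--Odlyzko estimate and of this last inequality, and putting $C(\eta)=\min\{c,c(\eta/2)\}$ merges both error terms into the single bound claimed.

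The main obstacle is the exceptional zero: everything else is a clean application of the effective Prime Ideal Theorem, but the Landau--Siegel zero $\beta_0$ may lie arbitrarily close to $1$ as a function of $d_K$, and only Siegel's (ineffective) lower bound for $1-\beta_0$ recovers enough saving. Matching the exponent $d_K^{-\eps}\log x$ coming from Siegel against the main term's exponent $n_K^{-1/2}(\log x)^{1/2}$ is what dictates both the choice $\eps=\eta/2$ and the precise shape $n_K^{-1}d_K^{\eta}$ of the lower admissible range for $x$.
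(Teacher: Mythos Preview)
Your proposal is correct and follows essentially the same route as the paper: Heilbronn's lemma locates $\beta_0$ as a zero of a quadratic $L$-function of conductor dividing $d_K$, Siegel's theorem gives $1-\beta_0\ge c(\eps)d_K^{-\eps}$, and the choice $\eps=\eta/2$ together with the condition $\log x\ge n_K^{-1}d_K^{\eta}$ makes $\Li(x^{\beta_0})\ll x\exp\{-c(\eps)n_K^{-1/2}(\log x)^{1/2}\}$, which is then absorbed into the Lagarias--Odlyzko error. The paper leaves the identification $2\eps=\eta$ implicit, but otherwise the arguments are identical.
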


\begin{proof}[Proof of Lemma \ref{rhop}.]  Since it may happen that
$-u$ is a perfect power we begin by taking $a$ to be the largest
divisor of $k$ for which $-u$ is a perfect $a$-th power. Then if $-u=v^a$
and $k=ab$ we see firstly that the polynomial $X^b-v$ is irreducible
over the rationals, and secondly that $n^b\equiv v\pmod{p}$
implies $n^k+u\equiv 0\pmod{p}$, whence
\begin{equation}\label{E1}
\rho_{u,k}(p)\ge\rho_{-v,b}(p).
\end{equation}We will apply Lemma \ref{lem_n1} to the field
$K=\mathbb{Q}(\theta)$, where $\theta$ is a root of $X^b-v$.  Thus $K$
has degree $b\le k$.  Moreover its discriminant will be a divisor of
\[D:={\rm  Disc}(1,\theta,\theta^2,\ldots,\theta^{b-1})=(-1)^{b-1}b^bv^{b-1}.\]

We now set
\[x_0=C(k,\eta)\exp(|D|^{\eta}).\]
If we choose the constant $C(k,\eta)$ sufficiently large then whenever
$x\ge x_0$ we will have
\[x\exp\{-C(\eta)b^{-1/2}(\log x)^{1/2}\} \le x(\log x)^{-2}\]
and
\[x\ge\exp\left\{\max\big(10b(\log|D|)^2\,,\,b^{-1}|D|^\eta\big)\right\}.\]
It therefore follows from Lemma \ref{lem_n1} that
\[\pi_K(x)=\Li(x)+O_{k,\eta}\(x(\log x)^{-2}\)\]
for $x\ge x_0$.

We now write $\nu_K(p)$ for
the number of first degree prime ideals of $K$ lying above $p$.  Then
\[\pi_K(x)=\sum_{p\le x}\nu_K(p)+O_k(\sum_{p^e\le x,\, e\ge 2}1).\]
Moreover, by Dedekind's Theorem we will have $\rho_{-v,b}(p)=\nu_K(p)$
whenever $p\nmid D$. In the remaining case in which $p\mid D$ we have
$\rho_{-v,b}(p)\le b\le k$ and $\nu_K(p)\le b\le k$.  It therefore follows that
\[\pi_K(x)=\sum_{p\le x}\rho_{-v,b}(p)+O_k(x^{1/2})+O_k(\log|D|),\]
so that
\[\sum_{p\le x}\rho_{-v,b}(p)=\Li(x)+O_{k,\eta}(x(\log x)^{-2})\]
when $x\ge x_0$.

We now observe that
\begin{eqnarray*}
\prod_{x<p\le y}\left(1-\frac{\rho_u(p)}{p}\right)&\le&
\exp\left\{-\sum_{x<p\le y}\frac{\rho_u(p)}{p}\right\}\\
&\le& \exp\left\{-\sum_{x<p\le y}\frac{\rho_{-v,b}(p)}{p}\right\}
\end{eqnarray*}
by (\ref{E1}).
Assuming that $y\ge x_0$ we may then use summation by parts to calculate that
\begin{eqnarray*}
\sum_{x<p\le y}\frac{\rho_{-v,b}(p)}{p}&\ge&
\sum_{\max(x,x_0)<p\le y}\frac{\rho_{-v,b}(p)}{p}\\
&=&\log\log y-\log\log\big(\max(x,x_0)\big)+O_{k,\eta}(1)\\
&\ge&\log\log y-\log\log x-\log\log x_0+O_{k,\eta}(1)\\
&=&\log\log y-\log\log x-\eta\log|D|+O_{k,\eta}(1)\\
&\ge&\log\log y-\log\log x-\eta k\log|u|+O_{k,\eta}(1).
\end{eqnarray*}
We therefore have
\[\prod_{x<p\le y}\left(1-\frac{\rho_u(p)}{p}\right)\ll_{k,\eta}
|u|^{k\eta}\frac{\log x}{\log y}\]
when $y\ge x_0$.  Of course this estimate is trivial when $y\le x_0$
since one then has $\log y\ll_{k,\eta}
|D|^{\eta}\ll_{k,\eta}|u|^{k\eta}$. The lemma then follows.
\end{proof}

%
\section{Main argument}
\label{sec:3}
%
Fix a positive integer $k$.
Let $x$ be a large number, sufficiently large depending on $k$, let
$c_1$ and $c_2$ be two positive constants depending on $k$ to be chosen later,
and put
\[
N = \prod_{p\le x} p, \qquad z=x^{c_1 \log_3 x/\log_2 x}, \qquad
 y = \frac{c_2 x \log x \log_3 x}{(\log_2 x)^2}.
\]

In the rest of the paper we will prove the following lemma.
\begin{lem}\label{main} There is a number $m\le 2N$ such that
$m^k+u$ is composite for $|u|\le y$.
\end{lem}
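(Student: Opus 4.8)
The plan is to mimic Rankin's construction, but with the added degree of freedom that we choose the value $m$ (whose $k$-th power sits in the long interval) rather than asking for primes in a fixed interval. I would set up a sieve over the smooth modulus $N=\prod_{p\le x}p$ and kill all the translates $m^k+u$, $|u|\le y$, by imposing congruence conditions on $m$ modulo the primes $p\le x$. The key observation is that for a prime $p$, the condition $p\mid m^k+u$ is a condition on $m$ modulo $p$ that is solvable (for a given $u$) precisely when $-u$ is a $k$-th power residue mod $p$, and then it cuts out $\rho_u(p)$ residue classes. So the strategy is: first handle the "large" $u$ (those divisible by some large prime, or those for which $m^k+u$ has a small prime factor in an easy range) by a crude sieve, and then handle the surviving small/stubborn $u$ one at a time, assigning to each its own prime $p\le x$ so that $p\mid m^k+u$. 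The quantity $y\asymp x\log x\log_3 x/(\log_2 x)^2$ is exactly the Rankin-type bound: it is (up to constants) the number of $u$ we must kill, matched against $\pi(x)\sim x/\log x$ primes, with the loss factor $\log_2 x/\log_3 x$ coming from the smooth-number sieve.

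Concretely I would proceed in stages. \textbf{Stage 1 (Erd\H{o}s--Rankin sieve on $m$).} Choose residue classes for $m$ modulo the primes $p\le z$ to remove all $u$ with $|u|\le y$ such that $m^k+u$ has a prime factor $\le z$; here one uses that for "most" primes $p\le z$ there is a residue class for $m$ forcing $p\mid m^k+u$ for many $u$ in a progression, and a greedy/probabilistic argument (as in \cite{FGKT} or the classical Rankin argument) shows that after this stage the number of uncovered $u$ is $\ll y\log_2 x/\log x$, i.e.\ comfortably smaller than $\pi(x;z<p\le x)$. \textbf{Stage 2 (removing $u$ with a large smooth part or many prime factors).} Use Lemma \ref{psi} and Lemma \ref{sieve1modr} to discard the negligibly few $u$ for which $m^k+u$ is itself $z$-smooth or otherwise pathological. \textbf{Stage 3 (matching).} For each remaining "bad" $u$, we need a prime $p\in(z,x]$, not yet used, and a choice of residue class $m\bmod p$ with $p\mid m^k+u$; this requires $-u$ to be a $k$-th power residue mod $p$. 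Here Lemma \ref{rhop} is the engine: it guarantees that $\prod_{z<p\le x}(1-\rho_u(p)/p)$ is small (like $\log z/\log x$ times $|u|^\eps$), so that for each fixed bad $u$ a positive proportion of primes $p\le x$ admit a valid residue class, and a Hall-type/greedy matching assigns distinct primes to distinct $u$. Finally, CRT combines all the chosen residue classes mod $\prod_{p\le x}p=N$ into a single $m$ with $0<m\le 2N$ (the factor $2$ absorbing the lift of the CRT class), and by construction every $m^k+u$ with $|u|\le y$ has a prime factor $\le x<m^k+u$, hence is composite.

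The main obstacle is Stage 3, the matching step, because it requires quantitative control of how many $u$ each prime can be responsible for: one must ensure that the bipartite graph between bad $u$'s and available primes $p$ has a perfect matching on the $u$-side. This is where $|u|^\eps$ in Lemma \ref{rhop} must be kept harmless — since $|u|\le y\le x^{1+o(1)}$ we have $|u|^\eps\le x^{O(\eps)}$, which only costs a power of $\log x$ after taking logs and is absorbed by choosing constants appropriately — and where the interplay of the three constants $c_1$ (defining $z$), $c_2$ (defining $y$), and the implied sieve constants has to be balanced so that (number of bad $u$) $<$ (number of usable primes) with room to spare. A secondary technical point is that $\rho_u(p)$ can be as large as $k$, so a single residue class for $m\bmod p$ may knock out up to $k$ values of $u$ at once; this only helps, but bookkeeping it (and the genuinely problematic $u$ sharing common factors with the discriminant, already dispatched with $O_k(1)$ losses in Lemma \ref{rhop}) needs care. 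Everything else — the smooth-number bound, the linear sieve, the large-sieve inequality of Lemma \ref{HBlemma} (which will be used to control the number of $u$ for which $-u$ is a quadratic/$k$-th power nonresidue to too many primes simultaneously) — is standard input.
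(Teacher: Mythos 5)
Your overall architecture --- an Erd\H{o}s--Rankin sieve on $m$ modulo the small and medium primes, followed by a matching of the surviving exceptional $u$ to individual primes in $(z,x]$ --- is the right skeleton, and you correctly identify the matching step as the crux. But your proposed resolution of that step has a genuine gap. You want to deduce from Lemma \ref{rhop} that for each surviving $u$ a positive proportion of the primes $p\le x$ have $-u$ a $k$-th power residue. Lemma \ref{rhop} cannot deliver this: its proof rests on the Prime Ideal Theorem for the field $\QQ(\theta)$ with $\theta^b=v$, whose discriminant is polynomial in $|u|$, and the unconditional error term is only under control once the logarithm of the height exceeds a fixed power of $|u|$, i.e.\ once the range of primes extends up to $\exp(|u|^{c})$. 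Since $|u|$ ranges up to $y\asymp x\log x\log_3x/(\log_2x)^2$, the bound of Lemma \ref{rhop} over the range $z<p\le x$ is $\ll_{k,\eps}|u|^{\eps}\log z/\log x=|u|^\eps\cdot c_1\log_3x/\log_2x$, which exceeds $1$ (hence is vacuous) once $|u|$ exceeds a fixed power of $\log_2 x$. There is no unconditional equidistribution statement for $k$-th power residues modulo primes $p\le x$ that is uniform in $|u|$ up to $x^{1+o(1)}$, so the ``positive proportion of compatible primes, then greedy/Hall matching'' step cannot be carried out as described.

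The paper circumvents this in three ways, none of which appear in your sketch. For odd $k$ it sidesteps equidistribution entirely by matching each $u$ to a prime $p_u$ with $(p_u-1,k)=1$ (e.g.\ $p_u\equiv 2\pmod k$), so that \emph{every} residue class is a $k$-th power residue and the congruence $m^k\equiv -u\pmod{p_u}$ is always solvable. For even $k$ it restricts to primes $p\equiv 3\pmod{2k}$, for which $k$-th power residues coincide with quadratic residues, so the solvability condition becomes $\pfrac{-u}{p}=1$; the set $U'$ of $u$ compatible with too few such primes is then bounded \emph{on average over $u$} (not individually) by Heath-Brown's quadratic large sieve (Lemma \ref{HBlemma}), giving $|U'|\ll_\eps x^{1/2+2\eps}$. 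Finally --- and this is the piece your proposal has no mechanism for --- the residual set $U'$ cannot be matched to primes $\le x$ at all; the paper disposes of it by letting $m=Mj+A$ run over $1\le j\le K$ with $K=\prod_{x/2<p\le x}p=e^{x/2+o(x)}$ and showing via an upper-bound sieve that some $j$ makes $(Mj+A)^k+u$ composite for all $u\in U'$ simultaneously. It is only in this last step, where the relevant parameter satisfies $\log K\asymp x$, far larger than any fixed power of $|u|^{\eta}$, that Lemma \ref{rhop} is actually applicable and non-trivial. To repair your argument you would need to add both the reduction to quadratic residues (there is no large-sieve inequality for genuine $k$-th power residue symbols with $k>2$) and the final averaging-over-$j$ argument.
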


Theorem \ref{mainthm} will
follow upon observing that $m^k \le e^{kx+o(x)}$ as $x\to\infty$ and consequently
that
$$
y \gg_k \frac{\log (m^k) \log_2 (m^k) \log_4 (m^k)}{(\log_3 (m^k))^2}.
$$

We will select $m$ be choosing residue classes for $m$ modulo $p$ for
primes $p\le x$.  Let
$$
\PP_1 = \{ p: p\le \log x \text{ or } z < p \le x/4 \}, \qquad
\PP_2 = \{ p: \log x < p \le z \}.
$$
We first choose
\be\label{m1}\begin{split}
 m&\equiv 0\pmod{p} \quad (p\in\PP_1), \\
 m&\equiv 1\pmod{p} \quad (p\in\PP_2).
\end{split}
\ee
Observe that $p|(m^k+u)$ if $p|u$ for some $p\in \PP_1$.
Because $y< (x/4)\log x$, any remaining value of $u$ is thus either
composed only of 
primes in $\PP_2$ (in particular, $u$ is $z$-smooth), including
$|u|=1$, or $|u|$ is a prime larger than $x/4$. 
 For any $u$ in the latter category such that $p|(u+1)$ for some $p\in
\PP_2$, $p|(m^k+u)$. 
 Let $U$ denote the set of exceptional values of $u$, that is, the set of $u\in [-y,y]$
 not divisible by any prime in $\PP_1$, and such that if $|u|$ is prime then $p\nmid (u+1)$ for all $p\in\PP_2$.
 By Lemmas \ref{psi} and \ref{sieve1modr}, if $c_1$ is sufficiently small, then
 \[
|U|  \ll \frac{y}{\log^5 x} + \frac{y}{\log x} \prod_{p\in \PP_2} \(1-\frac{1}{p}\) \ll
  \frac{y\log_2 x}{\log x\log z} = \frac{c_2}{c_1} \frac{x}{\log x}.
 \]
Choosing $c_2$ appropriately, we can ensure that $|U| \le \delta x/\log x$, where $\delta>0$ depends on
$k$ ($\delta$ will be chosen later).

The remaining steps depend on whether $k$ is odd or even.  If $k$ is odd, the construction is very easy.
For each $u\in U$, associate with $u$ a different prime $p_u\in (x/4,x]$ such that $(p_u-1,k)=1$ (e.g., one can take
$p_u\equiv 2\pmod{k}$ if $k\ge 3$).  Then every residue modulo $p_u$ is a $k$-th power residue, and we take $m$
in the residue class modulo $p_u$ such that
\be\label{m2kodd}
 m^k \equiv -u \pmod{p_u} \quad (u\in U).
\ee
By the prime number theorem for arithmetic progressions, the number of
available primes is at least
\[
x/(2\phi(k)\log x)\ge |U|
\]
if $\delta$ is small enough.
With this construction, $p_u|(m^k+u)$ for every $u\in U$. 
 Therefore, $m^k+u$ is divisible by a prime $\le x$ for every $|u|\le y$.
Furthermore, \eqref{m1} and \eqref{m2kodd} together imply that $m$ is defined
modulo a number $N'$, where $N'|N$.
Therefore, there is an admissible value of $m$ satisfying 
$N < m\le 2N$.  The prime number theorem implies that $N=e^{x+o(x)}$,
thus $m^k-y > x$.  Consequently, $m^k+u$ is
composite for $|u| \le y$.

Now suppose that $k$ is even.
There do not exist primes for which every residue modulo $p$ is a $k$-th power residue.
However, we maximize the density of $k$-th power residues by choosing primes $p$ such that
$(p-1,k)=2$, e.g. taking $p\equiv 3\pmod{2k}$.  For such primes $p$, every quadratic residue is a $k$-th power residue.
Let
\[
 \PP_3 = \{ x/4 < p \le x/2 : p\equiv 3\pmod{2k} \}.
\]
By the prime number theorem for arithmetic progressions, $|\PP_3|\ge x/(5\phi(2k)\log x)$.
We aim to associate numbers $u\in U$ with distinct primes $p_u\in \PP_3$ such that $\pfrac{-u}{p_u}=1$.
This ensures that the congruence $m^k+u\equiv 0\pmod{p_u}$ has a solution.  We, however, may
not be able to find such $p$ for every $u\in U$, but can find appropriate primes for most $u$.
Let
\[
 U' = \left\{u\in U: \pfrac{-u}{p}=1\text{ for at most } \frac{\delta x}{\log x} \text{ primes } p\in \PP_3 \right\}.
\]
The numbers $u\in U\backslash U'$ may be paired with different primes $p_u\in \PP_3$ such that
$\pfrac{-u}{p_u}=1$.  We then may take $m$ such that
\be\label{m2}
 m^k \equiv -u \pmod{p_u} \quad (u\in U\backslash U').
\ee
Next we will show that $|U'|$ is small.   Write
\[
 S = \sum_{u\in U} \Big| \sum_{p\in \PP_3} \pfrac{-u}{p} \Big|^2.
\]
Each $u$ may be written uniquely in the form $u=su_1^2 u_2$, where $s=\pm 1$,
$u_2>0$ and $u_2$ is squarefree.
By quadratic reciprocity,
\[
 \pfrac{-u}{p} = (-s) \pfrac{u_2}{p} = (-s)(-1)^{\frac{u_2-1}2} \pfrac{p}{u_2},
\]
since $p\equiv 3\pmod{4}$.
Given $u_2$, there are at most $\sqrt{y/u_2} \le \sqrt{y}$ choices for $u_1$.  Hence, using Lemma \ref{HBlemma},
\begin{align*}
 S &= \sum_{u\in U} \Big| \sum_{p\in \PP_3} \pfrac{p}{u_2} \Big|^2 \\
&\le \sum_{u_2\le y} 2y^{1/2} \Big| \sum_{p\in \PP_3} \pfrac{p}{u_2} \Big|^2 \\
&\ll_\eps x^{5/2+\eps}.
 \end{align*}
Now let $\delta = \frac{1}{15\phi(2k)}$, so that $\delta x/\log x \le \frac13 |\PP_3|$. If $u\in U'$,
then clearly
\[
\Big| \sum_{p\in \PP_3} \pfrac{-u}{p} \Big| \ge \frac13 |\PP_3 |  \ge \delta \frac{x}{\log x}.
\]
It follows that $|S| \gg |U'| (x/\log x)^2$, and consequently that
\be\label{Uprime}
|U'| \ll_\eps x^{1/2+2\eps}.
\ee

Let $A\mod M$ denote the set of numbers $m$ satisfying the congruence conditions \eqref{m1} and
\eqref{m2}, where $0\le A< M$.  Thus, if $m\equiv A\pmod{M}$ and $u\not\in U'$, then $m^k+u$ is divisible by a prime $\le x/2$.
Let
\[
 K = \prod_{x/2 <p\le x} p.
\]
We'll take $m=Mj+A$, where $1\le j\le K$, and aim to show that there exists a value of $j$ so that
$(mj+A)^k+u$ is composite for every $u\in U'$.
By sieve methods (see \cite{HR}),
\begin{align*}
 \sum_{j=1}^K \#\{ u\in U' : (Mj+A)^k+u\text{ prime} \} &= \sum_{u\in U'} \# \{ 1\le j\le K : (Mj+A)^k+u \text{ prime} \} \\
 &\ll \sum_{u\in U'} K \prod_{y < q \le \sqrt{K}} \(1 - \frac{\rho_u(q)}{q} \).
\end{align*}
By Lemma \ref{rhop}, the above product is
$$\ll_{k,\eps} u^{\eps/2} \frac{\log y}{\log K} \ll_{k,\eps} u^{\eps/2} \frac{\log x}{x}. $$
Combined with our estimate
\eqref{Uprime} for the size of $|U'|$, we find that
\[
 \sum_{1\le j\le K} \#\{ u\in U' : (Mj+A)^k+u\text{ prime} \}
\ll_{k,\eps} \frac{K}{x^{1/2-4\eps}}.
\]
It follows that the left hand side above is zero for some $j$.  That is, $(Mj+A)^k+u$ is composite for every
$u\in U'$.  Therefore, $(Mj+A)^k+u$ is composite for every $u$
satisfying $|u| \le y$.  Finally, we note that $Mj+A \le 2N$, and the proof of
Lemma~\ref{main} is complete.

\bigskip

\textbf{Remark 1.} For odd $k$ the constant $c(k)$ in Theorem~\ref{mainthm}
is effective. For even $k$ it is ineffective due to the use of Siegel's 
theorem in the proof of Lemma~\ref{rhop}.

\begin{acknowledgement}
Research of the third author was partially performed while he was
visiting the University of Illinois at
Urbana--Champaign.  Research of the first author was partially performed
while visiting the University of Oxford.
 Research of the first and third authors was
also carried out in part at the University of Chicago, 
and they are thankful to Prof. Wilhelm Schlag
for hosting these visits.  

The first author was supported by NSF grant DMS-1201442.
The research of the second author was supported by EPSRC grant EP/K021132X/1.
The research of the the third author was partially supported by
Russian Foundation for Basic Research, Grant 14-01-00332, and
by Program Supporting Leading Scientific Schools,
Grant Nsh-3082.2014.1.
\end{acknowledgement}



\begin{thebibliography}{999}


\bibitem{Cra} H. Cram\'er, \emph{On the order of magnitude of the difference
between consecutive prime numbers,} Acta Arith. \textbf{2} (1936),
396--403.

\bibitem{Da} H. Davenport, {\it Multiplicative number theory}, 3rd ed.,
Graduate Texts in Mathematics vol. 74, Springer-Verlag, New York, 2000.

\bibitem{Gra} A. Granville, \emph{Harald Cram\'er and the distribution of
prime numbers}, Scandanavian Actuarial J. \textbf{1}  (1995), 12--28.

\bibitem{FGKT} K. Ford, B. Green, S. Konyagin and T. Tao,
\emph{Large gaps between consecutive prime numbers}, 
\texttt{arXiv:1408.4505}.

\bibitem{HR} H. Halberstam and H.-E. Richert, \textit{Sieve Methods},
Academic Press, London, 1974.

\bibitem{H-B} D. R. Heath-Brown, \textit{A mean value estimate for real
character sums}, Acta Arith. \textbf{72} (1995), 235--275.

\bibitem{Heil}
H. Heilbronn, \textit{On real zeros of Dedekind $\zeta$-functions, }
Canad. J. Math. \textbf{25} (1973), 870--873.

\bibitem{LO} 
J.C. Lagarias and A.M. Odlyzko, \textit{Effective versions of the 
Chebotarev density theorem,} Algebraic number fields, 
(Proc. Sympos., Univ. Durham, Durham, 1975), 409--464. 
(Academic Press, London, 1977).

\bibitem{MP} H. Maier and C. Pomerance, {\it Unusually large gaps
between consecutive primes}. Trans. Amer. Math. Soc. {\bf 322} (1990), 
no. 1, 201--237.

\bibitem{M}
J. Maynard, \emph{Large gaps between primes}, \texttt{arXiv:1408.5110}.

\bibitem{Pi} J. Pintz, {\it Very large gaps between consecutive primes.}
J. Number Theory {\bf 63} (1997), no. 2, 286--301.

\bibitem{Ra38} R. Rankin, \textit{The difference between consecutive prime
numbers}, J. London Math. Soc. \textbf{13} (1938), 242--247.

\bibitem{Ra63} R. A. Rankin, \emph{The difference between consecutive
prime numbers. V,} Proc. Edinburgh Math. Soc. (2) 13 (1962/63), 331--332.


\end{thebibliography}
\end{document}